\newtheorem{theorem}{Theorem}[section]
\newtheorem{lemma}[theorem]{Lemma}
\newtheorem{corollary}[theorem]{Corollary}
\begin{document}
	
	\title{Haj\'{o}s-like theorem for signed graphs}
	
	\author{Yingli Kang\thanks{Paderborn Institute for Advanced Studies in
		Computer Science and Engineering and institute for Mathematics,
		Paderborn University,
		Warburger Str. 100,
		33102 Paderborn,
		Germany; yingli@mail.upb.de; 
		Fellow of the International Graduate School ``Dynamic Intelligent Systems''.}}
	
	\date{}
	
\maketitle

\begin{abstract}
The paper designs five graph operations, and proves that every signed graph with chromatic number $q$ can be obtained from all-positive complete graphs $(K_q,+)$ by repeatedly applying these operations. This result gives a signed version of the Haj\'{o}s theorem, emphasizing the role of all-positive complete graphs played in the class of signed graphs, as played in the class of unsigned graphs.
\end{abstract}

\section{Introduction}
 We consider a graph to be finite and simple, i.e., with no loops or multiple edges.
 Let $G$ be a graph and $\sigma\colon\ E(G)\rightarrow \{1,-1\}$ be a mapping. The pair $(G,\sigma)$ is called a \emph{signed graph}. We say that $G$ is the \emph{underlying graph} of $(G,\sigma)$ and $\sigma$ is a \emph{signature} of $G$.
 The \emph{sign} of an edge $e$ is the value of $\sigma(e)$, and the \emph{sign product} $sp(H)$ of a subgraph $H$ is defined as $sp(H)=\prod_{e\in E(H)}\sigma(e)$.
 An edge is \emph{positive} if it has positive sign; otherwise, the edge is \emph{negative}.
 A signature $\sigma$ is \emph{all-positive} (resp., \emph{all-negative}) if it has positive sign (resp., negative sign) on each edge.
A graph $G$ together with an all-positive signature is denoted by $(G,+)$ and similarly, $(G,-)$ denotes a signed graph where the signature is all-negative.
Throughout the paper, to be distinguished from ``a signed graph'' and from ``a multigraph'', ``a graph'' is always regarded as an unsigned simple graph.

Let $(G,\sigma)$ be a signed graph. For $v \in V(G)$, denote by $E(v)$ the set of edges incident to $v$.
A \emph{switching} at a vertex $v$ defines a signed graph $(G,\sigma')$ with $\sigma'(e) = -\sigma(e)$ if $e \in E(v)$, and $\sigma'(e) = \sigma(e)$ if $e \in E(G)\setminus E(v)$.
Two signed graphs $(G,\sigma)$ and $(G,\sigma^*)$ are {\em switch-equivalent} (briefly, \emph{equivalent}) if they can be obtained from each other by a sequence of switchings. We also say that
$\sigma$ and $\sigma^*$ are \emph{equivalent signatures} of $G$.

A signed graph $(G,\sigma)$ is \emph{balanced} if each circuit contains even number of negative edges; otherwise, $(G,\sigma)$ is \emph{unbalanced}.
A signed graph $(G,\sigma)$ is \emph{antibalanced} if each circuit contains even number of positive edges.
It is well known (see e.g. \cite{Raspaud_2011}) that $(G,\sigma)$ is balanced if and
only if $\sigma$ is switch-equivalent to an all-positive signature, and $(G,\sigma)$ is antibalanced
if and only if $\sigma$ is switch-equivalent to an all-negative signature.

In the 1980s, Zaslavsky \cite{Zaslavsky_1982, Zaslavsky_1984} initiated the study on vertex colorings of signed graphs.
The natural constraints for a coloring $c$ of a signed graph $(G,\sigma)$ are that, (1) $c(v) \not= \sigma(e) c(w)$ for each edge $e=vw$, and (2) the colors can be inverted under switching, i.e., equivalent signed graphs have the same chromatic number.
In order to guarantee these properties of a coloring, Zaslavsky \cite{Zaslavsky_1982} used 
$2k+1$ ``signed colors" from the color set $\{-k, \dots, 0, \dots, k\}$ and studied the
interplay between colorings and zero-free colorings through the chromatic polynomial.

Recently, M\'a\v{c}ajov\'a, Raspaud and \v{S}koviera \cite{Raspaud_2014}
modified this approach. For $n = 2k+1$ let $M_n = \{0, \pm 1, \dots,\pm k\}$, and
for $n = 2k$ let $M_n = \{\pm 1, \dots,\pm k\}$.
A mapping $c$ from $V(G)$ to $M_n$ is a {\em signed $n$-coloring} of $(G,\sigma)$, if $c(v) \not= \sigma(e) c(w)$ for each edge $e=vw$.
They  defined $\chi_{\pm}((G,\sigma))$ to be the smallest number $n$ such that $(G,\sigma)$ has a signed $n$-coloring, and called it the \textit{signed chromatic number}.
A distinct version of vertex colorings of signed graphs, defined by homomorphisms of signed graphs, was proposed in \cite{Edita_Sopena_2014}.

In \cite{KS_2015}, the authors studied circular colorings of signed graphs. The related integer $k$-coloring of a signed graph $(G,\sigma)$ is defined as follows.
Let $\mathbb{Z}_k$ denote the cyclic group of integers modulo $k$, and the inverse of an element $x$ is denoted by $-x$.
A function $c : V(G) \rightarrow \mathbb{Z}_k$ is a \emph{$k$-coloring} of $(G,\sigma)$ if $c(v) \not= \sigma(e) c(w)$ for each edge $e=vw$. Clearly,
such colorings satisfy the constrains (1) and (2) of a vertex coloring of signed graphs.
The {\em chromatic number $\chi((G,\sigma))$} of a signed graph $(G,\sigma)$ is the smallest $k$ such that $(G,\sigma)$ has a $k$-coloring.
As shown in \cite{KS_2015}, two equivalent signed graphs have the same chromatic number.
In this paper, we follow this version of vertex colorings of signed graphs. 

Many questions concerning the colorings of a signed graph have been discussed. In \cite{Raspaud_2014} and \cite{Stiebitz_2015},
they study the signed chromatic number $\chi_{\pm}$ of signed graphs.
The chromatic spectrum and signed chromatic spectrum of signed graphs are given in \cite{Yingli_2015_00614}.
A few classical results concerning the choice number of graphs are generalized to signed graphs in \cite{Steffen_2015}.
This paper addresses an analogue of a well-known theorem of Haj\'{o}s for signed graphs.

In 1961, Haj\'{o}s proved a result on the chromatic number of graphs, which is one of the classical results in the field of graph colorings. This result has several equivalent formulations, one of them states as the following two theorems.
\begin{theorem}[\cite{Hajos_1961}] \label{thm_closed}
	The class of all graphs that are not $q$-colorable is closed under the following three operations:
	\begin{enumerate}[(1)]
		\setlength{\itemsep}{-0.1cm}
		\item Add vertices or edges;
		\item Identify two nonadjacent vertices;
		\item Let $G_1$ and $G_2$ be two vertex-disjoint graphs with $a_1b_1\in E(G_1)$ and $a_2b_2\in E(G_2)$. Make a graph $G$ from $G_1\cup G_2$ by removing $a_1b_1$ and $a_2b_2$, identifying $a_1$ with $a_2$, and adding a new edge between $b_1$ and $b_2$ (see Figure \ref{oper_3}).
	\end{enumerate}	
\end{theorem}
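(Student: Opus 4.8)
The plan is to verify each of the three closure properties separately, in every case assuming a proper $q$-coloring of the output graph and then extracting from it a proper $q$-coloring of one of the input graphs, contradicting the assumed non-$q$-colorability. Operation (1) is immediate: a graph $G'$ obtained from a non-$q$-colorable graph $G$ by adding vertices or edges contains $G$ as a subgraph, so any proper $q$-coloring of $G'$ would restrict to a proper $q$-coloring of $G$, which is impossible.

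For operation (2), let $G'$ be obtained from a non-$q$-colorable graph $G$ by identifying two nonadjacent vertices $u$ and $v$ into a single vertex $w$. Given a proper $q$-coloring $c'$ of $G'$, I would define a coloring $c$ of $G$ by setting $c(u) = c(v) = c'(w)$ and $c(x) = c'(x)$ for every other vertex. Each edge of $G$ corresponds to an edge of $G'$ and is therefore properly colored, and since $u$ and $v$ are nonadjacent there is no edge between them to violate. Thus $c$ is a proper $q$-coloring of $G$, a contradiction.

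The substance of the argument lies in operation (3), which I expect to be the main obstacle. Write $a$ for the vertex formed by identifying $a_1$ with $a_2$, and suppose toward a contradiction that the constructed graph $G$ admits a proper $q$-coloring $c$. Because $b_1 b_2 \in E(G)$ we have $c(b_1) \neq c(b_2)$, so $c(a)$ must differ from at least one of $c(b_1)$ and $c(b_2)$; for if $c(a)$ equalled both, then $c(b_1) = c(b_2)$ would follow. Suppose first that $c(a) \neq c(b_1)$. Restricting $c$ to $V(G_1)$ and assigning $a_1$ the color $c(a)$ properly colors every edge of $G_1$ other than $a_1 b_1$, since each such edge survives in $G$; moreover the restored edge $a_1 b_1$ is properly colored as well, because $c(a_1) = c(a) \neq c(b_1)$. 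This produces a proper $q$-coloring of $G_1$, contradicting its non-$q$-colorability. The case $c(a) \neq c(b_2)$ is handled identically with $G_2$, $a_2$, $b_2$ in place of $G_1$, $a_1$, $b_1$. In either case we reach a contradiction, so $G$ has no proper $q$-coloring, which completes the verification.
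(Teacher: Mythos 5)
Your proof is correct. The paper states this theorem without proof (it is cited directly from Haj\'{o}s's 1961 paper), but your argument is the standard one, and it is essentially the same scheme the paper uses for its signed analogue, Theorem \ref{thm_closed_sb}: assume a proper $q$-coloring of the output graph, observe that the color of the identified vertex cannot agree with the colors of both $b_1$ and $b_2$, and restrict the coloring accordingly to obtain a proper $q$-coloring of $G_1$ or of $G_2$, a contradiction.
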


\begin{figure}[h]
	\centering
	\includegraphics[width=12cm]{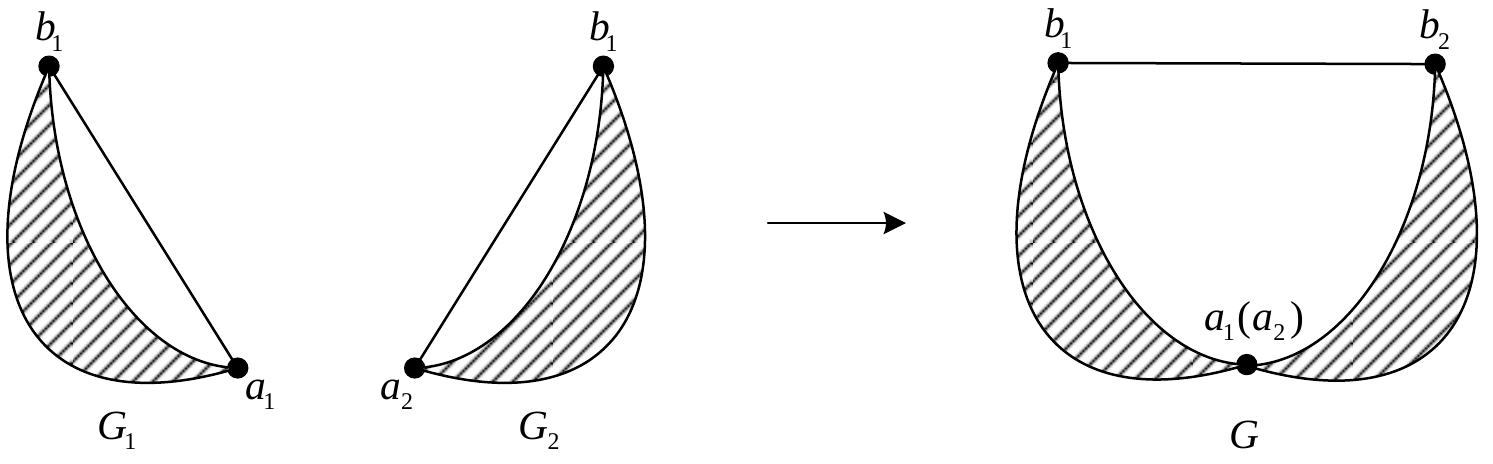} \\
	\caption{Operation $(3)$} \label{oper_3}
\end{figure}

The Operation (3) is known as the Haj\'{o}s construction in literature.
\begin{theorem}[Haj\'{o}s theorem, \cite{Hajos_1961}] \label{thm_Hajos}
	Every non-$q$-colorable graph can be obtained by Operations (1)-(3) from the complete graph $K_{q+1}$.
\end{theorem}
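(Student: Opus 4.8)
The plan is to prove the nontrivial inclusion: every non-$q$-colorable graph is \emph{$q$-constructible}, meaning obtainable from copies of $K_{q+1}$ by Operations (1)--(3). The reverse inclusion is immediate from Theorem \ref{thm_closed}, since $K_{q+1}$ is not $q$-colorable and the class of non-$q$-colorable graphs is closed under the three operations. I would argue by contradiction via an extremal counterexample: among all graphs that are non-$q$-colorable but not $q$-constructible, choose $G$ with $|V(G)|$ minimum and, subject to that, $|E(G)|$ maximum. The key consequence of this choice is edge-maximality: for every nonedge $uv$ of $G$, the graph $G+uv$ is still non-$q$-colorable (any $q$-coloring of $G+uv$ would also be a $q$-coloring of $G$), yet it has more edges on the same vertex set, so it cannot be a counterexample and is therefore $q$-constructible.

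First I would dispose of two degenerate shapes of $G$. If $G$ is complete, then non-$q$-colorability forces $|V(G)|\ge q+1$, so $G$ contains $K_{q+1}$ and is obtained from it by Operation (1), contradicting the choice of $G$. More generally, if the non-adjacency relation on $V(G)$ were transitive, then $G$ would be complete multipartite, and non-$q$-colorability would force at least $q+1$ parts, hence again a $K_{q+1}$ subgraph and $q$-constructibility. Therefore non-adjacency is not transitive: there exist vertices $a,b_1,b_2$ with $ab_1,ab_2\notin E(G)$ but $b_1b_2\in E(G)$.

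The heart of the proof is to rebuild $G$ using the Hajós construction. By edge-maximality, $G+ab_1$ is $q$-constructible; let $G'$ be a vertex-disjoint copy of $G+ab_2$ (also $q$-constructible), writing $v'$ for the copy of each $v\in V(G)$. I would apply Operation (3) to the edges $ab_1$ of $G+ab_1$ and $a'b_2'$ of $G'$: delete these two edges, identify $a$ with $a'$, and add the edge $b_1b_2'$. Since $ab_1,ab_2\notin E(G)$, the two deletions leave exactly two copies of $G$ sharing only the vertex $a=a'$, plus the new edge $b_1b_2'$; call this $q$-constructible graph $H$. Finally I would use Operation (2) to identify $v$ with $v'$ for each $v\in V(G)\setminus\{a\}$, one pair at a time. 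At the moment a pair $\{v,v'\}$ is identified each of the two vertices still forms its own class, so they are adjacent in the quotient only if $vv'\in E(H)$; the only cross edge of $H$ is $b_1b_2'$, and since we never identify $b_1$ with $b_2'$ every such identification is legitimate. Collapsing all pairs merges the two copies of $G$ into one and turns $b_1b_2'$ into $b_1b_2$, which already lies in $E(G)$; hence the end product is exactly $G$, proving $G$ is $q$-constructible and giving the desired contradiction.

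I expect the main obstacle to be this last step: checking that the iterated identifications in Operation (2) are all performed on \emph{nonadjacent} pairs and that the sequence terminates in $G$ itself rather than in a graph with extra or missing edges. The delicate points are that $b_1\ne b_2$ (guaranteed by $b_1b_2\in E(G)$), so the pairs $\{b_1,b_1'\}$ and $\{b_2,b_2'\}$ are distinct and never coincide with the adjacent pair $\{b_1,b_2'\}$, and that no identification performed earlier can create an adjacency between a later pair $\{v,v'\}$, which is why processing one pair at a time and tracking the classes in the quotient of $H$ is essential.
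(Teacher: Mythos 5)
Your proof is correct, and it is essentially the classical Haj\'{o}s argument. Note first that the paper does not actually prove Theorem~\ref{thm_Hajos}; it is quoted from \cite{Hajos_1961}, so the only in-paper object of comparison is the proof of the signed analogue, Theorem~\ref{thm_Hajos_sb}, which is built on the same extremal scheme you use (a counterexample with $|V(G)|$ minimum and, subject to that, $|E(G)|$ maximum). The interesting difference is in how the two constructible graphs are recombined. With Operations (1)--(3) as stated, one cannot attack completeness of the counterexample directly from a single nonadjacent pair $x,y$: that would require recombining the graph obtained by identifying $x$ with $y$ with the graph $G+xy$, and Operation (3) cannot split a vertex. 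So you are forced into the standard detour --- show that non-adjacency is not transitive, pick $a,b_1,b_2$ with $ab_1,ab_2\notin E(G)$ and $b_1b_2\in E(G)$, and glue $G+ab_1$ to a disjoint copy of $G+ab_2$ along $a$ --- which uses only edge-maximality, plus the complete-multipartite case to absorb the transitive situation. The paper's signed Operation (sb3) is precisely a vertex-splitting operation, so in the proof of Theorem~\ref{thm_Hajos_sb} the first claim handles an \emph{arbitrary} nonadjacent pair by recombining the identified graph $(G_1,\sigma_1)$ (which uses vertex-minimality) with the edge-added graph $(G_2,\sigma_2)$ (which uses edge-maximality), with no need for the transitivity argument. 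Both routes are sound; yours is the one that actually works for the classical operations, and the point you flag as delicate --- that each pair $\{v,v'\}$ consists of singleton classes when it is identified, and that the only cross edge $b_1b_2'$ joins a pair that is never identified because $b_1\neq b_2$ --- is exactly the verification the argument needs, and you carry it out correctly.
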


The Haj\'{o}s theorem have been generalized in several different ways, by considering more general colorings than vertex $k$-colorings of graphs. 
The analogues of the Haj\'{o}s theorem are proposed for list-colorings \cite{Gravier_1996}, for weighted colorings \cite{Araujo_2013}, and for group colorings \cite{An_2010}. 
However, all of these extensions are still restricted to unsigned graphs. 

In this paper, we analogously establish a result on the chromatic number $\chi$ of signed graphs, that generalizes the result of Haj\'{o}s to signed graphs. Hence, this result is a signed version of the Haj\'{o}s theorem and is called the Haj\'{o}s-like theorem of signed graphs (briefly, the Haj\'{o}s-like theorem). 
To prove this theorem, we consider signed multigraphs rather than signed simple graphs.
Indeed, for vertex colorings of signed multigraphs, it suffices to consider signed bi-graphs, a subclass of signed multigraphs in which no two edges of the same sign locate between two same vertices.
Clearly, signed bi-graphs contains signed simple graphs as a particular subclass.
Hence, the Haj\'{o}s-like theorem holds for signed bi-graphs and in particular, for signed graphs.
Moreover, the theorem shows that, for the class of signed bi-graphs, the complete graphs together with an all-positive signature plays the same role as it plays for the class of unsigned graphs.

The structure of the rest of the paper are arranged as follows.
In section \ref{sec_complete}, we design five operations on signed bi-graphs and show that these operations are closed in the class of non-$q$-colorable signed bi-graphs for any given positive integer $q$.
Moreover, we established some lemmas necessary for the proof of the Haj\'{o}s-like theorem.
In section \ref{sec_Hajos}, we address the proof of the Haj\'{o}s-like theorem.

\section{Graph operations on signed bi-graphs} \label{sec_complete}
\subsection{Signed bi-graphs}
A \emph{bi-graph} is a multigraph having no loops and having at most two edges between any two distinct vertices.
Let $G$ be a bi-graph, $u$ and $v$ be two distinct vertices of $G$.
Denote by $E(u,v)$ the set of edges connecting $u$ to $v$, and let $m(u,v)=|E(u,v)|$.
Clearly, $0 \leq m(u,v)\leq 2$.
A bi-graph $G$ is \emph{bi-complete} if $m(x,y)=2$ for any $x,y \in V(G)$, and is \emph{just-complete} if  $m(x,y)=1$ for any $x,y \in V(G)$.
A \emph{signed bi-graph} $(G,\sigma)$ is a bi-graph $G$ together with a signature $\sigma$ of $G$ such that any two multiple edges have distinct signs. 
A bi-complete signed bi-graph of order $n$ is denoted by $(K_n,\pm)$.
It is not hard to see that $\chi((K_n,\pm))=2n-2$ and $\chi((K_n,+))=n$.
The concepts of $k$-coloring, chromatic number and switching of signed graphs are naturally extended to signed bi-graphs, working in the same way, and the related notations are inherited.

Let $(G,\sigma)$ be a signed multigraph. Between each pair of vertices, remove all the multiple edges of the same sign but one. We thereby obtain a signed bi-graph $(G',\sigma')$. We can see that $c$ is a $k$-coloring of $(G,\sigma)$ if and only if $c'$ is a $k$-coloring of $(G',\sigma')$, where $c'$ is the restriction of $c$ into $(G',\sigma')$.  
Therefore, for the vertex colorings of signed multigraphs, it suffices to consider signed bi-graphs.

\subsection{Graph operations}
Let $k$ be a nonnegative integer. A signed bi-graph is \emph{$k$-thin} if it is a bi-complete signed bi-graph minus at most $k$ pairwise vertex-disjoint edges. Clearly, if a signed bi-graph is $0$-thin, then it is bi-complete.
\begin{theorem} \label{thm_closed_sb}
The class of all signed bi-graphs that are not $q$-colorable is closed under the following operations:
\begin{enumerate}[(sb1)]
\setlength{\itemsep}{-0.1cm}
  \item Add vertices or signed edges.
  \item Identify two nonadjacent vertices.
  \item Let $(G_1,\sigma_1)$ and $(G_2,\sigma_2)$ be two vertex-disjoint signed bi-graphs. Let $v$ be a vertex of $G_1$ and $e$ be a positive edge of $G_2$ with ends $x$ and $y$. Make a graph $(G,\sigma)$ from $(G_1,\sigma_1)$ and $(G_2,\sigma_2)$ by splitting $v$ into two new vertices $v_1$ and $v_2$, removing $e$, and identifying $v_1$ with $x$ and $v_2$ with $y$ (see Figure  \ref{oper_sb3}).
  \item Switch at a vertex.
  \item When $q$ is even, remove a vertex that has at most $\frac{q}{2}$ neighbors; when $q$ is odd, remove a negative edge whose ends are connected by no other edges, identify these two ends, and add signed edges so that the resulting bi-signed graph is
  $\frac{q-3}{2}$-thin.

\end{enumerate}
  \begin{figure}[h]
  	\centering
  	\includegraphics[width=12cm]{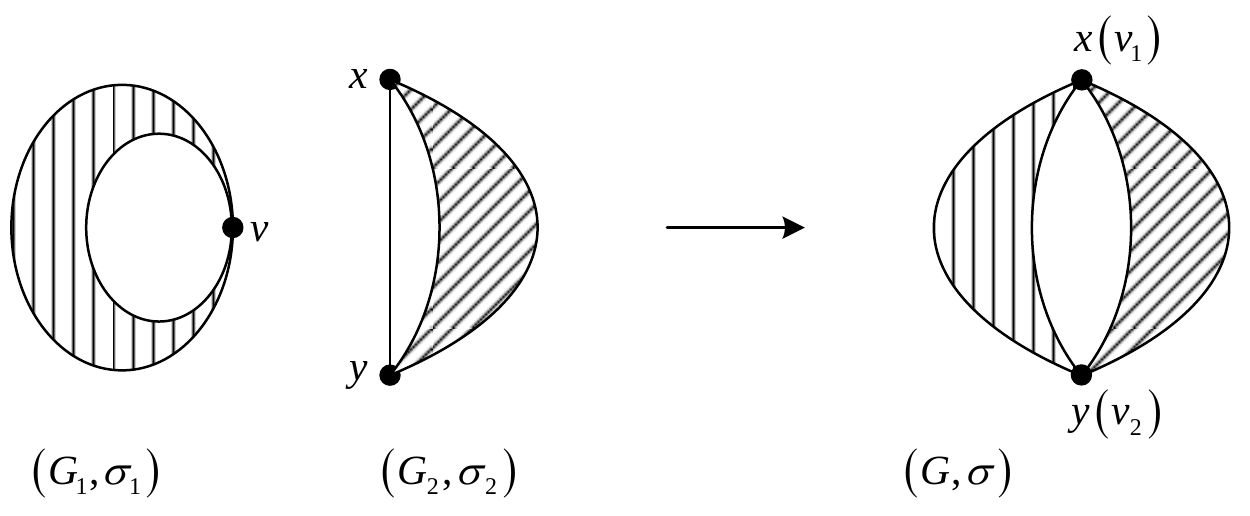} \\
  	\caption{Operation $(sb3)$} \label{oper_sb3}
  \end{figure}
\end{theorem}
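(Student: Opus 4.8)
The plan is to check each operation separately and show that it preserves non-$q$-colorability; concretely, I will assume the output has a $q$-coloring and pull it back to a $q$-coloring of some input, a contradiction. Three of the five operations are immediate. For (sb1), any $q$-coloring of a graph obtained by adding vertices or edges restricts to a $q$-coloring of the original, so non-$q$-colorability is inherited. For (sb2), if $u,v$ are the nonadjacent vertices identified into $w$ and the result has a $q$-coloring $c'$, I set $c(u)=c(v)=c'(w)$ and keep $c'$ elsewhere; since $u,v$ share no edge, every edge-constraint of the original is among those already satisfied by $c'$, so the original would be $q$-colorable. For (sb4), switching produces an equivalent signed bi-graph, and equivalent signed graphs have the same chromatic number (recalled from \cite{KS_2015}), so $q$-colorability is unchanged.

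For (sb3) I would run the signed analogue of the Haj\'os construction. Suppose the output $(G,\sigma)$ has a $q$-coloring $c$, where $v\in V(G_1)$ was split into $v_1,v_2$ and identified with the ends $x,y$ of the positive edge $e$ of $G_2$. Restricting $c$ to $V(G_2)$ gives a valid coloring of $G_2-e$. Because $e$ is \emph{positive}, its constraint is exactly $c(x)\neq c(y)$; hence if $c(v_1)=c(x)\neq c(y)=c(v_2)$ we could reinstate $e$ and $q$-color $G_2$, a contradiction. Therefore $c(v_1)=c(v_2)$, and merging $v_1,v_2$ back into $v$ with this common color yields a $q$-coloring of $G_1$, again a contradiction. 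The positivity of $e$ is essential: it is what makes ``the two split colors must agree'' the correct conclusion, mirroring the role of the deleted edges in the classical construction.

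Operation (sb5) is the genuinely new ingredient, and the proof splits on the parity of $q$ because the involution $x\mapsto -x$ on $\mathbb{Z}_q$ has one fixed point ($0$) when $q$ is odd and two ($0$ and $q/2$) when $q$ is even. For $q$ even, suppose $G-v$ has a $q$-coloring $c$, where $v$ has at most $q/2$ neighbors; I want to extend $c$ to $v$. Each edge $e=vw$ forbids the single value $\sigma(e)c(w)$, so all forbidden values lie in $\bigcup_w\{c(w),-c(w)\}$, a union of at most $q/2$ orbits of the involution. Since $\mathbb{Z}_q$ has $q/2+1$ such orbits, at least one orbit, and hence at least one admissible color, remains for $v$; this extends $c$ to a $q$-coloring of $G$, a contradiction. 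The bound $q/2$ on the number of neighbors is exactly calibrated to this orbit count.

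The hardest part is (sb5) for $q$ odd, where one deletes a negative edge $e=xy$ with $m(x,y)=1$, identifies $x,y$ into a vertex $u$, and adds edges so that the result $(G',\sigma')$ is $\frac{q-3}{2}$-thin. I would combine two ingredients. Contraction: from a $q$-coloring $c'$ of $(G',\sigma')$, the pullback with $c(x)=c(y)=c'(u)$ is a valid coloring of $G-e$, and the negative edge $e$ requires $c'(u)\neq -c'(u)$, which since $q$ is odd fails only when $c'(u)=0$; hence non-$q$-colorability of $G$ forces $c'(u)=0$, so no other vertex can carry color $0$. Orbit capacity: in a bi-complete signed bi-graph a $q$-coloring must place the colors in distinct orbits of $x\mapsto -x$ on $\mathbb{Z}_q$, and deleting a matching of size $m$ allows at most $m$ of these orbits to be doubled. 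Applying this to the vertices other than $u$, all of which avoid $0$ and so use only the $(q-1)/2$ nonzero orbits, gives that their number is at most $(q-1)/2+(q-3)/2=q-2$. Thus a $q$-coloring would force the order of $(G',\sigma')$ to be at most $q-1$, so closure holds precisely when the construction yields a graph of order at least $q$. The main obstacle is this calibration: one must verify that $\frac{q-3}{2}$ is exactly the thinness budget for which the $\pm$-orbit count of $\mathbb{Z}_q$ produces the obstruction at the right threshold, and that the edges added to meet the thin target can always be realized within the bi-graph constraint that parallel edges carry opposite signs.
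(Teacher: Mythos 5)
Your treatment of (sb1), (sb2), (sb4), and (sb3) matches the paper's proof, and your handling of (sb5) for even $q$ is correct --- in fact your orbit count ($q/2+1$ orbits of $x\mapsto -x$ on $\mathbb{Z}_q$, each neighbor blocking at most one orbit) makes precise a step the paper states rather tersely. The problem is (sb5) for odd $q$, where you have a genuine gap. Your capacity argument shows only that a $q$-coloring of the resulting $\frac{q-3}{2}$-thin graph (with the identified vertex $z$ forced to color $0$) would bound its order by $q-1$, and you then concede that ``closure holds precisely when the construction yields a graph of order at least $q$.'' But the theorem claims closure unconditionally, and the operation gives no lower bound on the order: non-$q$-colorable signed bi-graphs can be small --- e.g.\ $(K_n,\pm)$ has chromatic number $2n-2$, so already $n\approx q/2$ vertices suffice --- and applying the operation to such a graph produces a thin graph of order well below $q$, for which your argument yields no contradiction at all.

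The missing idea is a recoloring step, and it is exactly how the paper closes this case: the contradiction should be aimed at the non-$q$-colorability of the \emph{original} graph $(H',\sigma_H')$, not at a cardinality bound on the new graph. There are $\frac{q-1}{2}$ nonzero orbits, and a nonzero orbit can contain two vertices only if that pair is one of the at most $\frac{q-3}{2}$ missing edges of the thin graph (two bi-adjacent vertices can never share an orbit, and three vertices in one orbit would force two intersecting missing edges). Hence some nonzero orbit $V_p$ contains at most one vertex. Now exchange colors between $V_0=\{z\}$ and $V_p$: the result is still a proper $q$-coloring (the vertex of $V_p$, if any, is the unique vertex allowed to take color $0$, and $z$ with color $\pm p$ conflicts with no one since $V_p$ is otherwise empty), and now $z$ has a nonzero color. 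Splitting $z$ back into the two ends of the deleted negative edge and giving both that common nonzero color $a$ is proper, since $q$ odd and $a\neq 0$ give $a\neq -a$; this produces a $q$-coloring of $(H',\sigma_H')$, contradicting its non-$q$-colorability regardless of how few vertices it has. (Your closing worry about realizing the added edges is a non-issue: the operation by definition outputs a thin graph, so thinness is a hypothesis, not something to verify. Note also that the paper asserts the existence of $p$ with $|V_p|=1$; what the counting actually gives, and all the exchange needs, is $|V_p|\leq 1$.)
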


\begin{proof}
Since Operations $(sb1),(sb2),(sb4)$ neither make loops nor decrease the chromatic number, it follows that the class of non-$q$-colorable signed bi-graphs is closed under these operations.

For Operation $(sb3)$, suppose to the contrary that $(G,\sigma)$ is $q$-colorable.
Let $c$ be a $q$-coloring of $(G,\sigma)$.
Denote by $x'$ and $y'$ the vertices of $G$ obtained from $x$ and $y$, respectively.
If $c(x')=c(y')$, then the restriction of $c$ into $G_1$, where $v$ is assigned with the same color as $x'$ and $y'$, gives a $q$-coloring of $(G_1,\sigma_1)$, contradicting with the fact that $(G_1,\sigma_1)$ is not $q$-colorable.
Hence, we may assume that $c(x')\neq c(y')$.
Note that $e$ is a positive edge of $(G_2,\sigma_2)$.
Thus the restriction of $c$ into $G_2$ gives a $q$-coloring of $(G_2,\sigma_2)$, contradicting with the fact that $(G_2,\sigma_2)$ is not $q$-colorable.
Therefore, the statement holds for Operation $(sb3)$.

It remains to verify the theorem for Operation $(sb5)$.
For $q$ even, suppose to the contrary that the removal of a vertex $u$ from a non-$q$-colorable signed bi-graph $(G,\sigma)$ yields the $q$-colorability. Let $\phi$ be a $q$-coloring of $(G,\sigma)-u$ using colors from a set $S$, where $S=\{0,\pm 1,\ldots,\pm (\frac{q}{2}-1),\frac{q}{2}\}$.
Notice that each neighbor of $u$ makes at most two colors unavailable for $u$. Since $u$ has at most $\frac{q}{2}$ neighbors, $S$ still has a color available for $u$.
Hence, we can extend $\phi$ to be a $q$-coloring of $(G,\sigma)$, a contradiction.

For the case that $q$ is odd, let $(H,\sigma_H)$ be obtained from a non-$q$-colorable signed bi-graph $(H',\sigma_H')$ by applying this operation to a negative edge $e'$. Let $z$ be the resulting vertex from the two ends of $e'$, say $x'$ and $y'$.
Suppose to the contrary that $(H,\sigma_H)$ is $q$-colorable. Let $\psi$ be a $q$-coloring of $(H,\sigma_H)$ using colors from the set $\{0,\pm 1,\ldots,\pm (\frac{q-1}{2})\}$.
If $\psi(z)\neq 0$, then by assigning $x'$ and $y'$ with the color $\psi(z)$, we complete a $q$-coloring of  $(H',\sigma_H')$, a contradiction.
Hence, we may assume that $\psi(z)=0$.
For $0\leq i\leq \frac{q-1}{2}$, let $V_i=\{v\in V(G)\colon\ |\psi(v)|=i\}$.
Clearly, each $V_i$ is an antibalanced set and in particular, $V_0$ is an independent set.
Since $(H,\sigma_H)$ is $\frac{q-3}{2}$-thin, we can deduce that there exists $p\in \{1,\ldots,\frac{q-1}{2}\}$ such that $|V_p|=1$.
Exchange the colors between $V_0$ and $V_p$, and then assign $x'$ and $y'$ with the same color as $z$, we thereby obtain a $q$-coloring of $(H',\sigma_H')$ from $\psi$, a contradiction.
\end{proof}
\subsection{Useful lemmas}
Operation $(sb3)$ can be extended to the following one which works for signed bi-graphs.
\begin{enumerate}[(sb3')]
	\item Let $(G_1,\sigma_1)$ and $(G_2,\sigma_2)$ be two vertex-disjoint signed bi-graphs.
	For each $i\in \{1,2\}$, let $e_i$ be an edge of $G_i$ with ends $x_i$ and $y_i$.
	Make a graph $(G,\sigma)$ from $G_1\cup G_2$ by removing $e_1$ and $e_2$, identifying $x_1$	with $x_2$, and adding a new edge $e$ between $y_1$ and $y_2$ with $\sigma(e)=\sigma_1(e_1)\sigma_2(e_2)$.
\end{enumerate}

\begin{lemma} \label{lem_comb}
	Operation $(sb3')$ is a combination of Operations $(sb3)$ and $(sb4)$.
\end{lemma}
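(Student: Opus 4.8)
The plan is to realize $(sb3')$ by a single application of $(sb3)$, bracketed by switchings at one well-chosen vertex. The main idea is that the edge $e_1=x_1y_1$ need not be deleted from $G_1$ at all; instead I would take $v:=x_1$ as the vertex of $G_1$ to be split in $(sb3)$, and distribute the edges at $x_1$ so that $e_1$ is the only edge carried by $v_2$ while all remaining edges at $x_1$ are carried by $v_1$. Then, using $e_2=x_2y_2$ as the positive edge $e$ of $(sb3)$ and identifying $v_1$ with $x_2$ and $v_2$ with $y_2$, the merged vertex $v_1=x_2$ carries precisely the edges of $x_1$ (other than $e_1$) together with the edges of $x_2$ (other than $e_2$)---that is, the vertex obtained by identifying $x_1$ and $x_2$ in $(sb3')$---while $e_1$, now incident to $v_2=y_2$ and to $y_1$, becomes exactly the new edge joining $y_1$ and $y_2$.

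First I would dispose of the case $\sigma_2(e_2)=+1$: here $(sb3)$ applies directly, and since splitting and identifying leave signs untouched, the new edge inherits the sign $\sigma_1(e_1)=\sigma_1(e_1)\sigma_2(e_2)$ of $e_1$, as $(sb3')$ demands. In the case $\sigma_2(e_2)=-1$ I would first switch at $y_2$ in $(G_2,\sigma_2)$ using $(sb4)$, which turns $e_2$ positive, then apply $(sb3)$ as above, and finally switch at (the image of) $y_2$ once more. Every edge of $G_2$ at $y_2$ is flipped twice and so regains its original sign, whereas the new edge $y_1y_2$, produced with the sign $\sigma_1(e_1)$ of $e_1$, is flipped exactly once and ends with sign $-\sigma_1(e_1)=\sigma_1(e_1)\sigma_2(e_2)$, which is what $(sb3')$ prescribes.

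The delicate point---and the one that forces the choice of switching vertex---will be the sign bookkeeping. The compensating switch must be performed at $y_2$ and not at $x_2$: in the output the image of $y_2$ meets only edges of $G_2$ and the single new edge, so switching there repairs the signature of $G_2$ and corrects the new edge while leaving every edge inherited from $G_1$ alone; switching at $x_2$ would act on the merged vertex and corrupt the signature of $G_1$. To finish, I would verify that the two recipes yield literally the same signed bi-graph: the vertex sets coincide, the edges of $G_1$ other than $e_1$ and the edges of $G_2$ other than $e_2$ correspond bijectively with equal signs, and the unique new edge joins $y_1$ to $y_2$ with sign $\sigma_1(e_1)\sigma_2(e_2)$; in particular no two parallel edges of equal sign are created, so the outcome is a legitimate signed bi-graph. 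This exhibits $(sb3')$ as a combination of $(sb3)$ and $(sb4)$.
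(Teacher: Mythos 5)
Your proof is correct, and its core is the same trick the paper itself uses: realize $(sb3')$ by a single application of $(sb3)$ in which one endpoint of one of the two edges is split so that the surviving edge is \emph{transported} to become the new edge joining $y_1$ and $y_2$, with $(sb4)$ supplying the sign repairs. The difference lies in the sign bookkeeping. The paper designates $e_1$ as the positive edge of $(sb3)$ and splits $x_2$, so it must distinguish ``at least one of $e_1,e_2$ positive'' (handled by symmetry) from ``both negative,'' the latter requiring switches at $x_1$ and $x_2$ beforehand and a compensating switch at the merged vertex afterwards. Your version conditions only on the sign of the designated edge $e_2$: when it is negative you switch at $y_2$ before and at its image after, and the sign of $e_1$ is irrelevant throughout, since the transported edge gets flipped exactly when it needs to be. This yields a more uniform case analysis (no WLOG, no separate both-negative case), and your choice of $y_2$ as the switching vertex is correctly justified: its image meets only $G_2$-edges and the one new edge, so the compensating switch cannot disturb $\sigma_1$. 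A further small advantage: your explicit prescription that $v_2$ carries \emph{only} $e_1$ handles the parallel-edge case $m(x_1,y_1)=2$ cleanly, whereas the paper's phrasing (``$y_2$ as the neighbor of $x_2'$'') would send both edges of $E(x_2,y_2)$ to $x_2'$ when $m(x_2,y_2)=2$, which is a minor imprecision your formulation avoids.
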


\begin{proof}
	We use the notations in the statement of Operation $(sb3')$.
	First assume that at least one of $e_1$ and $e_2$ is a positive edge.
	With loss of generality, say $e_1$ is positive.
	We apply Operation $(sb3)$ to $(G_1,\sigma_1)$ and $(G_2,\sigma_2)$ where $e_1$ is removed, $x_2$ is split into two new vertices $x_2'$ and $x_2''$ with $y_2$ as the neighbor of $x_2'$ and all other neighbors of $x_2$ as the neighbors of $x_2''$, and then $x_2'$ is identified with $y_1$ and $x_2''$ is identified with $x_1$. The resulting signed bi-graph is exactly $(G,\sigma)$, we are done.
	Hence, we may next assume that both $e_1$ and $e_2$ are negative edges.
	Switch at $x_1$ in $(G_1,\sigma_1)$ and at $x_2$ in $(G_2,\sigma_2)$.
	Since $e_1$ and $e_2$ are positive in the resulting signed bi-graphs, we may apply Operation $(sb3)$ similarly as above,
	obtaining a signed bi-graph, which leads to $(G,\sigma)$ by switching again at $x_1$ and $x_2$.
\end{proof}

\begin{lemma}\label{lem_antibalanced}
	A just-complete signed bi-graph is antibalanced if and only if the sign product on each triangle is $-1$,
	and is balanced if and only if the sign product on each triangle is $1$.
\end{lemma}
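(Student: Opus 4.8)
The plan is to express the balanced and antibalanced conditions through sign products of circuits, and then to use just-completeness to decompose an arbitrary circuit into triangles. Write $\ell(C)$ for the length of a circuit $C$ and recall that $sp(C) = (-1)^{t}$, where $t$ is the number of negative edges of $C$. Thus $(G,\sigma)$ is balanced precisely when $sp(C) = 1$ for every circuit $C$. For the antibalanced case, if $C$ has $p$ positive and $t$ negative edges then $p + t = \ell(C)$, so $p$ is even if and only if $t \equiv \ell(C) \pmod 2$, i.e. if and only if $sp(C) = (-1)^{\ell(C)}$; hence $(G,\sigma)$ is antibalanced precisely when $sp(C) = (-1)^{\ell(C)}$ for every circuit $C$. (Alternatively, one may invoke the switching characterizations recalled in the introduction together with the fact that switching at a vertex flips an even number of edges of any circuit and so preserves $sp(C)$.) Specializing to a triangle $T$ gives the two ``only if'' directions at once: balanced forces $sp(T)=1$, and antibalanced forces $sp(T) = (-1)^{3} = -1$.

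The core of the argument is the converse, and this is where completeness is essential. I would prove that for every circuit $C = v_1 v_2 \cdots v_\ell v_1$ of length $\ell \ge 3$ in a just-complete signed bi-graph,
\[
	sp(C) \;=\; \prod_{i=1}^{\ell-2} sp(T_i), \qquad T_i = v_1 v_{i+1} v_{i+2},
\]
using the fan triangulation of $C$ from the vertex $v_1$; all the chords $v_1 v_3, v_1 v_4, \dots, v_1 v_{\ell-1}$ needed to form the triangles exist precisely because $G$ is just-complete. The identity follows from a bookkeeping observation: each edge of $C$ lies in exactly one triangle $T_i$, whereas each chord $v_1 v_j$ lies in exactly two consecutive triangles $T_{j-2}$ and $T_{j-1}$ and therefore contributes the factor $\sigma(v_1 v_j)^2 = 1$, which cancels. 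Equivalently, I could argue by induction on $\ell$: the chord $v_1 v_3$ splits $C$ into the triangle $v_1 v_2 v_3$ and the shorter circuit $v_1 v_3 v_4 \cdots v_\ell v_1$, and since $\sigma(v_1 v_3)^2 = 1$ one gets $sp(C) = sp(v_1 v_2 v_3)\cdot sp(v_1 v_3 v_4 \cdots v_\ell v_1)$.

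With the factorization in hand the conclusion is immediate. If $sp(T) = 1$ for every triangle $T$, then $sp(C) = 1$ for every circuit $C$, so $(G,\sigma)$ is balanced; if $sp(T) = -1$ for every triangle, then $sp(C) = (-1)^{\ell - 2} = (-1)^{\ell(C)}$ for every circuit $C$, so $(G,\sigma)$ is antibalanced. The degenerate cases in which the graph has at most two vertices, where there are neither triangles nor circuits, hold vacuously. The main obstacle is the factorization identity together with the recognition that it rests on completeness; the passage between balance/antibalance and sign-product conditions, and the final specialization to the two hypotheses, are routine.
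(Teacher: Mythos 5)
Your proof is correct and follows essentially the same route as the paper: both reduce balance/antibalance to sign products of circuits and then decompose an arbitrary circuit into triangles via chords guaranteed by just-completeness, with the chord signs cancelling because $\sigma(e)^2=1$. Your explicit fan-triangulation identity is just a closed form of the paper's argument, which does the same decomposition by induction on circuit length using an arbitrary chord; indeed, your stated inductive alternative is precisely the paper's proof.
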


\begin{proof}
	For the first statement, since a just-complete signed bi-graph $(G,\sigma)$ is exactly a complete signed graph, $G$ is antibalanced if and only if the sign product on each circuit of length $k$ is $(-1)^k$. Hence, the proof for the necessity is trivial. Let us proceed to the sufficiency, which will be proved by induction on $k$.
	
	Clearly, the statement holds for $k=3$ because of the assumption of the lemma. Assume that $k\geq 4$. Let $C$ be a circuit of length $k$.
	Take any chord $e$ of $C$, which divides $C$ into two circuits $C_1$ and $C_2$. For $i\in \{1,2\}$, let $k_i$ denote the length of $C_i$. Thus, $k=k_1+k_2-2.$
	By applying the induction hypothesis, we have $sp(C_i)=(-1)^{k_i}$. It follows that $sp(C)=sp(C_1)sp(C_2)=(-1)^k,$ the statement also holds.
	
	The second statement can be argued in the same way as for the first one.
	We only have to pay attention to the equivalence between that $(G,\sigma)$ is balanced and that the sign product on each circuit of length $r$ is 1.
\end{proof}

A signed bi-graph of order $3r$ is \emph{$\triangledown$-complete} if it is $(K_{3r},\pm)$ minus $r$ pairwise vertex-disjoint all-positive triangles.
Clearly, a $\triangledown$-complete signed bi-graph is complete.

\begin{lemma} \label{lem_triangle-complete}
	The $\triangledown$-complete signed bi-graph of order $3r$ can be obtained from $(K_{2r+1},+)$ by Operations (sb1)-(sb5).
\end{lemma}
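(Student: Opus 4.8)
The plan is first to compute the target's chromatic number, which fixes the value of $q$, and then to realize the graph by the generalized Haj\'os construction. A short coloring argument shows that the $\triangledown$-complete signed bi-graph of order $3r$ has chromatic number $2r+1$. Indeed, in any $k$-coloring $c\colon V\to\mathbb{Z}_k$, two vertices in different triples are joined by both a positive and a negative edge, so $c(u)\neq c(v)$ and $c(u)\neq -c(v)$; hence the pairs $\{c(v),-c(v)\}$ are pairwise distinct across triples. Inside a triple only the three negative edges survive, so its three vertices may all take one color $x$ with $2x\neq 0$, occupying a single two-element pair $\{x,-x\}$. Since the at most two self-inverse colors ($0$, and $k/2$ when $k$ is even) can occupy at most two vertices of a triple, each triple must use at least one two-element pair, and these pairs are disjoint across triples; as there are only $\lfloor (k-1)/2\rfloor$ such pairs, we get $\lfloor (k-1)/2\rfloor\geq r$, forcing $k\geq 2r+1$, while the coloring giving the constant color $i$ to the $i$-th triple over $\mathbb{Z}_{2r+1}$ attains it. Because $\chi((K_{2r+1},+))=2r+1$ as well, the whole construction takes place among non-$2r$-colorable signed bi-graphs, so $q=2r$ is even and the relevant form of Operation $(sb5)$ deletes a vertex of degree at most $r$.

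Two structural observations then shape the construction. First, $\triangledown$-complete on $3r$ is the complete $\pm$-join of $r$ disjoint negative triangles, and $(K_{2r+1},+)$ is not even a subgraph of it: positive edges run only between distinct triples, of which there are $r<2r+1$, so no $2r+1$ vertices are pairwise positively adjacent. Hence merely adding vertices and edges by $(sb1)$ cannot suffice; the three positive edges inside each triple must genuinely be deleted, and in this toolkit deletion happens only through the edge-removing Haj\'os step. Second, since every operation preserves non-$2r$-colorability, each intermediate signed bi-graph has chromatic number at least $2r+1$; in particular the construction cannot pass through $\triangledown$-complete on $3(r-1)$, whose chromatic number is only $2r-1$. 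This rules out a naive induction on $r$ and forces the whole construction to be carried out at ``level $r$''.

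Accordingly I would build the graph directly from $r$ copies of $(K_{2r+1},+)$, using Operation $(sb3')$ (available by Lemma \ref{lem_comb}) as the main engine. Each copy would donate the three vertices of one triple, while its remaining $2r-2$ vertices are consumed by Haj\'os splittings and identifications $(sb2)$ that sew the copies together; at each $(sb3')$ the deleted edges $e_1,e_2$ are chosen to be exactly positive edges that must disappear inside the triples. The missing cross $\pm$-join edges would then be installed by $(sb1)$, auxiliary vertices left with at most $r$ neighbors pruned by $(sb5)$, and a final switching $(sb4)$ would normalize the signature. One point to watch is balance: since $(sb3')$, $(sb4)$ and the vertex deletion $(sb5)$ all send balanced signed bi-graphs to balanced ones, and the seeds $(K_{2r+1},+)$ are balanced, the unbalanced negative triangles cannot arise for free, so the required unbalance must be injected by $(sb1)$ (adding an edge of the appropriate sign to close an odd-signed circuit) or by $(sb2)$. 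To certify that the final signature is switch-equivalent to the prescribed one it then suffices, by Lemma \ref{lem_antibalanced}, to check that each triangle inside a triple has sign product $-1$.

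The main obstacle is precisely this sewing-and-deleting bookkeeping. One must lay out the network of $(sb3')$ steps so that the union of all deleted edges is exactly the $r$ positive triangles---three positive edges per triple and nothing else---so that the surviving cross edges form the bi-complete pattern, so that the vertex count lands on $3r$, and so that every intermediate graph remains non-$2r$-colorable (which is what legitimizes each Haj\'os step and the degree-bounded form of $(sb5)$). By contrast, getting the signs right is comparatively routine once the underlying bi-graph is correct, because Lemma \ref{lem_antibalanced} reduces the entire sign check to triangle sign products; the genuinely delicate part is the combinatorial layout of the Haj\'os constructions together with the degree control needed to apply $(sb5)$.
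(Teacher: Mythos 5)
Your preliminary analysis is correct and even in the same spirit as the paper: the target has chromatic number $2r+1$, the relevant closure parameter is $q=2r$ (so $(sb5)$ is the vertex-deletion form with degree bound $r$), no switch-equivalent copy of $(K_{2r+1},+)$ embeds in the $\triangledown$-complete graph (so $(sb1)$ and $(sb4)$ alone cannot suffice), and intermediate graphs must stay non-$2r$-colorable (so induction through smaller $\triangledown$-complete graphs is impossible). The general strategy you outline --- several copies of $(K_{2r+1},+)$ sewn together by $(sb3')$ via Lemma \ref{lem_comb}, cleanup by $(sb2)$, $(sb5)$, $(sb1)$, $(sb4)$ --- is also the paper's strategy. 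But the proposal stops exactly where the proof has to begin. You never specify which edges are removed by which $(sb3')$ step, which vertices get identified, or why any vertex ends up with few enough neighbors to be removable by $(sb5)$; you explicitly defer all of this as ``sewing-and-deleting bookkeeping'' and call it the main obstacle. Since the lemma is a purely constructive statement, that deferred layout \emph{is} the proof: listing the constraints a correct construction must satisfy (deleted edges forming exactly the $r$ positive triangles, vertex count $3r$, non-$2r$-colorability throughout) does not show that a construction meeting them exists.

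For comparison, the paper's proof consists precisely of this explicit layout, and it uses $r+1$ copies, not $r$: a hub copy $H_0$ with vertices $v_0^0,\ldots,v_0^{2r}$ and satellites $H_1,\ldots,H_r$. For each $j\in\{1,\ldots,r\}$ one switches at $v_0^j$ and applies $(sb3')$ so that the edge $v_0^jv_0^{j+r}$ of $H_0$ and a positive edge $v_j^0v_j^{2j}$ of $H_j$ are removed, $v_0^j$ is identified with $v_j^0$, and then $v_0^j$ is identified with $v_0^{j+r}$. After these $r$ sewings the hub vertex $v_0^0$ has \emph{exactly} $r$ neighbors (its $2r$ former neighbors in $H_0$ have been merged in pairs), which is what legitimizes deleting it by $(sb5)$; then, for each $k$, the pairwise nonadjacent vertices $v_1^k,\ldots,v_r^k$ are identified across the satellite copies, leaving a graph of order $3r$ in which each triple induces an unbalanced triangle, and $(sb1)$ plus $(sb4)$ finish. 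The hub copy is the device that simultaneously solves the two problems you flagged as the obstacle: it makes the $(sb3')$ deletions land where positive edges must vanish, and it manufactures the low-degree vertex needed for $(sb5)$. Your plan of $r$ copies, each donating one triple with its remaining $2r-2$ vertices ``consumed'' somehow, has no visible mechanism for either, so as written the proposal does not establish the lemma.
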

\begin{proof}
	Take $r+1$ copies of $(K_{2r+1},+)$, say $(H_i,+)$ of vertex set $\{v_i^0,\ldots,v_i^{2r}\}$ for $0\leq i\leq r$.
	For each $j\in \{1,\ldots,r\}$, switch at $v_0^j$, and then apply Operation (sb3') to $H_0$ and $H_j$ so that $v_0^jv_0^{j+r}$ and $v_j^0v_j^{2j}$ are removed and that $v_0^j$ is identified with $v_j^0$, and finally identify $v_0^j$ with $v_0^{j+r}$.
	The resulting signed bi-graph is denoted by $(G,\sigma)$.
	By Theorem \ref{thm_closed_sb}, since $(K_{2r+1},+)$ is not $2r$-colorable,
	$(G,\sigma)$ is not $2r$-colorable either.
	Note that $v_0^0$ has precisely $r$ neighbors in $G$. We can apply Operation (sb5) to $v_0^0$, i.e., we remove $v_0^0$ from $(G,\sigma)$.
	In the resulting signed bi-graph, for each $1\leq k\leq 2r$,
	since $v_1^k,\ldots,v_r^k$ are pairwise nonadjacent, we can apply Operation (sb1) to identify them into one vertex.
	Denote by $(H,\sigma_H)$ the resulting signed bi-graph.
	
	We can see that $(H,\sigma_H)$ is of order $3r$ and moreover, for $1\leq j\leq r$, the signed bi-graph induced by $\{v_0^j,v_1^{2j},v_1^{2j-1}\}$ is an unbalanced triangle.
	It follows that, by adding signed edges and switching if needed, we obtain the $\triangledown$-complete signed bi-graph of order $3r$ from $(H,\sigma_H)$.
\end{proof}

\begin{lemma}\label{lem_bicomplete}
	$(K_r,\pm)$ can be obtained from $(K_{2r-2},+)$ by Operation (sb1)-(sb5).
\end{lemma}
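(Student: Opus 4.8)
The plan is to work throughout at the threshold $q=2r-3$, which is odd; note that $\chi((K_r,\pm))=2r-2=\chi((K_{2r-2},+))$, so both graphs are non-$(2r-3)$-colorable and $(K_{2r-2},+)$ is the natural all-positive base for a derivation matching the chromatic number $2r-2$. The strategy has two phases. In the first phase I manufacture the bi-edges (pairs joined by both a positive and a negative edge) that an all-positive graph lacks, using only switching (Operation (sb4)) and identification of non-adjacent vertices (Operation (sb2)) on several disjoint copies of $(K_{2r-2},+)$. In the second phase I repeatedly apply Operation (sb5) in its odd form to contract the graph from $2r-2$ down to $r$ vertices, landing on the bi-complete signed bi-graph $(K_r,\pm)$.

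For the first phase, take one reference copy $A$ of $(K_{2r-2},+)$ and $t=\lceil\log_2 r\rceil$ further copies $B_1,\dots,B_t$. Partition the $2r-2$ vertices into $r$ groups consisting of two singletons and $r-2$ disjoint pairs, fix an injective binary labelling of the $r$ groups, and in each $B_\ell$ switch at the set $S_\ell$ of vertices whose group carries a $1$ in the $\ell$-th bit. Then identify corresponding vertices of $A,B_1,\dots,B_t$ by Operation (sb2); vertices in distinct copies are non-adjacent, so every identification is legitimate, and equal-sign parallel edges collapse in the resulting signed bi-graph. A short check shows that for two vertices $i,j$ the copy $A$ always contributes a positive edge, while some $B_\ell$ contributes a negative edge precisely when $i,j$ lie in different groups; hence every cross-group pair becomes a bi-edge and each of the $r-2$ distinguished pairs stays joined by a single positive edge. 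The outcome is a signed bi-graph on $2r-2$ vertices that is $(r-2)$-thin, and it is non-$(2r-3)$-colorable since it contains the all-positive $K_{2r-2}$ coming from $A$ (Theorem \ref{thm_closed_sb} guarantees (sb2) and (sb4) preserve non-colorability).

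For the second phase I would iterate the following step $r-2$ times, keeping the invariant that the current graph is bi-complete on its vertex set minus some pairwise vertex-disjoint lone edges. Pick one lone (positive) edge $vu$; since $v$ meets no other lone edge, switching at $v$ turns $vu$ into a lone negative edge while every bi-edge at $v$ remains a bi-edge. Now apply Operation (sb5) in the odd case: the ends of $vu$ are joined by no other edge, so I contract $vu$, after which the merged vertex is automatically bi-adjacent to every other vertex and the remaining lone edges survive untouched; the result is already $\tfrac{q-3}{2}=(r-3)$-thin, so no edges need be added. Each step lowers the vertex count and the number of lone edges by one, so after $r-2$ steps I reach a $0$-thin signed bi-graph on $r$ vertices, namely $(K_r,\pm)$.

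I expect the main obstacle to be the bookkeeping of the second phase: one must verify that a lone \emph{negative} edge is available for (sb5) at every stage, that contracting it keeps the remaining lone edges pairwise disjoint and makes the new vertex fully bi-adjacent, and that the thinness parameter never exceeds the prescribed $(r-3)$ (which pins the number of distinguished pairs to $r-2$, matching $2r-2=r+(r-2)$). A secondary point to settle cleanly is the separating-family argument of the first phase, namely that $\lceil\log_2 r\rceil$ switchings turn exactly the cross-group pairs into bi-edges; the cases $r=2,3$ should be recorded as base cases, the latter already exhibiting the switch-then-(sb5) mechanism in miniature.
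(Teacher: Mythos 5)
Your derivation is correct, but it reaches $(K_r,\pm)$ by a genuinely different route in its first half. The paper works with a \emph{single} copy of $(K_{2r-2},+)$: it switches at $v_1$ and applies the odd-case Operation (sb5) to $v_1v_2$, exploiting the clause of (sb5) that lets one \emph{add signed edges} so that the contracted graph becomes $\frac{q-3}{2}$-thin. That single application simultaneously performs the first contraction and conjures essentially all of the bi-edges, leaving exactly the $r-3$ disjoint lone pairs $v_3v_4,\dots,v_{2r-5}v_{2r-4}$; the remaining $r-3$ switch-and-contract steps then finish with no further edge additions. You instead refuse to use the edge-adding clause at all: you manufacture the bi-edges beforehand by gluing $1+\lceil\log_2 r\rceil$ switched copies of $(K_{2r-2},+)$ via (sb2) (a separating-family argument, legitimate since corresponding vertices in distinct copies remain nonadjacent throughout the identifications, with same-sign parallel edges collapsing under the paper's bi-graph convention), arriving at an $(r-2)$-thin graph; your second phase of $r-2$ switch-and-contract steps via (sb5) then coincides with the paper's endgame, and your thinness bookkeeping is right — after the first contraction the graph is exactly $(r-3)$-thin, and only gets thinner. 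What the two approaches buy: the paper's proof is much more economical (one copy, $r-2$ operations) but leans on reading (sb5) as licensing arbitrary edge additions up to the thinness bound; yours uses (sb5) purely as a contraction, which is more transparent and would survive a stricter reading of the operation, at the cost of a multi-copy construction — one that, incidentally, mirrors the technique the paper itself uses in Lemma \ref{lem_triangle-complete}. Your only presentational gap is that identifying vertices across disjoint copies presupposes first forming the disjoint union, which should be justified by (sb1) (adding vertices and signed edges), exactly as the paper does implicitly; this is a one-line fix, not a flaw in the argument.
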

\begin{proof}
	Let $(G,\sigma)$ be a copy of $(K_{2r-2},+)$ of vertices $v_1,\ldots,v_{2r-2}$.
	Clearly, $(G,\sigma)$ is not $(2r-3)$-colorable.
	Switch at $v_1$ and apply Operation (sb5) to $v_1v_2$ so that each of $v_3v_4,v_5v_6,\ldots,v_{2r-5}v_{2r-4}$ has no multiple edges.
	For each $i\in \{2,3,\cdots,r-2\}$, switch at $v_{2i}$ and apply Operation (sb5) to $v_{2i-1}v_{2i}$ so that no new signed edges are added.
	The resulting signed bi-graph is exactly $(K_r,\pm)$.
\end{proof}

\section{Haj\'{o}s-like theorem}\label{sec_Hajos}
We will need the following definitions for the proof of the Haj\'{o}s-like theorem.

Let $(G,\sigma)$ be a signed bi-graph. 
An \emph{antibalanced set} is a set of vertices that induce an antibalanced graph. 
Let $c$ be a $k$-coloring of $(G,\sigma)$. A set of all vertices $v$ with the same value of $|c(v)|$ is called a \emph{partite set} of $(G,\sigma)$. Let $U$ and $V$ be two partite sets. They are \emph{completely adjacent} if $m(u,v)\geq 1$ for any $u\in U$ and $v\in V$, \emph{bi-completely adjacent} if $m(u,v)=2$ for any $u\in U$ and $v\in V$, and \emph{just-completely adjacent} if $m(u,v)=1$ for any $u\in U$ and $v\in V$.

Let $(G,\sigma)$ be a signed bi-graph. A sequence $(x,y,z)$ of three vertices of $G$ is a \emph{triple} if there exist three integers $a,b,c$ satisfying
the following three conditions:
\begin{enumerate}[(i)]
	\setlength{\itemsep}{-0.1cm}
	\item $a,b,c\in \{1,-1\}$,
	\item $ab=c,$
	\item $a\notin \{\sigma(e)\colon\ e\in E(x,y)\}$, $b\notin \{\sigma(e)\colon\ e\in E(x,z)\}$, and $c\in \{\sigma(e)\colon\ e\in E(y,z)\}$.
\end{enumerate}
The sequence $(a,b,c)$ is called a \emph{code} of $(x,y,z)$.
Note that a triple may have more than one code.

\begin{theorem} \label{thm_Hajos_sb} (Haj\'{o}s-like theorem)
Every signed bi-graph with chromatic number $q$ can be obtained from $(K_q,+)$ by Operations (sb1)-(sb5).
\end{theorem}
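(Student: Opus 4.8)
The plan is to run the classical minimal-counterexample proof of the Haj\'os theorem (Theorem~\ref{thm_Hajos}), with the three classical operations replaced by (sb1), (sb2) and the derived Haj\'os-type Operation (sb3$'$) of Lemma~\ref{lem_comb}, and with $(K_q,+)$ in the role of $K_{q+1}$. The class of not-$(q-1)$-colorable signed bi-graphs, i.e.\ those with $\chi\geq q$, is closed under the operations (Theorem~\ref{thm_closed_sb} read with parameter $q-1$) and contains $(K_q,+)$, so I would prove the stronger statement that \emph{every} signed bi-graph with $\chi\geq q$ is obtainable from $(K_q,+)$; the theorem is then the case $\chi=q$. Accordingly I fix a counterexample $(G,\sigma)$ with the fewest vertices and, among these, the most edges, and I use Operation (sb4) freely to normalise its signature.

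Two consequences of this extremal choice drive the proof. Adding any one missing edge keeps $\chi\geq q$ and raises the edge count, so every one-edge extension of $(G,\sigma)$ is constructible; and identifying two vertices $u,v$ with $m(u,v)=0$ keeps $\chi\geq q$ (a $(q-1)$-coloring of the quotient lifts to $(G,\sigma)$) while lowering the vertex count, so every such quotient is constructible, and since a vertex may always be re-attached by (sb1), $(G,\sigma)$ is vertex-critical. I would then show that $(G,\sigma)$ contains no triple. Given a triple $(x,y,z)$ with code $(a,b,c)$, form disjoint constructible copies $(G_1,\sigma_1)$ and $(G_2,\sigma_2)$ by adding to $(G,\sigma)$ the missing $a$-edge $xy$ and the missing $b$-edge $xz$ respectively. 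Operation (sb3$'$) deletes these two edges, identifies the two copies of $x$, and inserts an edge on $yz$ of sign $ab=c$, which is already present by condition (iii); collapsing the two copies vertex by vertex with (sb2) then returns exactly $(G,\sigma)$, contradicting non-constructibility. Hence $(G,\sigma)$ is triple-free.

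Next I would convert triple-freeness into rigid structure. Read contrapositively, condition (iii) says that whenever a pair carries a $c$-edge, then for every third vertex $x$ and every factorisation $c=ab$ at least one of ``$a$ on $xy$'' and ``$b$ on $xz$'' holds; expanding the two factorisations of $c=+1$ and of $c=-1$ forces every vertex to be heavily joined to the ends of each edge, so that in a fixed $q$-coloring the partite sets (the level sets of $|c|$) are pairwise completely adjacent. Together with vertex-criticality and the bound of $\tfrac{q}{2}+1$ partite sets for $q$ even and $\tfrac{q+1}{2}$ for $q$ odd, this should force $(G,\sigma)$, up to added vertices and edges, to be one of two patterns: for $q$ even a bi-complete signed bi-graph (the smallest with $\chi=q$ being $(K_{q/2+1},\pm)$), and for $q$ odd a $\triangledown$-complete signed bi-graph whose partite sets are all-negative, hence antibalanced, triangles by Lemma~\ref{lem_antibalanced}. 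In either case Lemma~\ref{lem_bicomplete} or Lemma~\ref{lem_triangle-complete}, after first building a large enough all-positive complete graph by (sb1) and finally inserting the remaining edges by (sb1), shows $(G,\sigma)$ constructible --- the desired contradiction.

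I expect this last structural step to be the main obstacle. The reconstruction through (sb3$'$) is, once a triple is available, the classical Haj\'os construction plus the single bookkeeping identity $\sigma(e)=ab$ that the notion of a code is tailored to supply; the real difficulty is proving that triple-freeness forces precisely the bi-complete or $\triangledown$-complete pattern. Here the parity of $q$ enters essentially --- mirroring the two cases of Operation (sb5) --- the partite sets must be handled as antibalanced sets rather than as independent sets, and one must check that the resulting complete-like graph has chromatic number exactly matching the lemma being invoked. I expect the borderline analysis, where a partite set is larger than a single vertex but not a full antibalanced triangle and Operation (sb5) is needed to trim it, to require the most care.
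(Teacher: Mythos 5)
Your skeleton coincides with the paper's (same extremal choice, same triple-freeness argument via (sb3$'$), same target lemmas), but two essential pieces are missing. First, the paper's opening claim is that the minimal counterexample $(G,\sigma)$ is \emph{complete}: it applies Operation (sb3) to the quotient $(G_1,\sigma_1)$ (nonadjacent $x,y$ identified) and the positive-edge extension $(G_2,\sigma_2)$, splitting the identified vertex back into $x$ and $y$, and then collapses duplicate vertices with (sb2) to recover $(G,\sigma)$. You assemble exactly the two ingredients this needs --- every one-edge extension and every such quotient is constructible --- but you never combine them; you only run the analogous reconstruction for triples. Your substitute, deriving the adjacency structure from triple-freeness, cannot work by itself: a triple $(x,y,z)$ requires \emph{two} pairs of multiplicity at most $1$ sharing the vertex $x$, so for instance $(K_n,\pm)$ together with two nonadjacent vertices, each bi-completely joined to the rest, is triple-free yet incomplete. (Triple-freeness plus vertex-criticality would in fact suffice --- two nonadjacent vertices in a triple-free graph are forced to be bi-complete twins, so one could copy the other's color --- but that argument is not in your proposal.) Completeness is indispensable downstream: it is what makes the independent partite set a singleton, makes the antibalanced partite sets induce just-complete graphs, and makes all partite sets pairwise completely adjacent.

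Second, the structural endgame, which is the actual core of the paper's proof, is left as an expectation, and the dichotomy you predict is not correct. In the paper each parity splits into two subcases: either every two partite sets are bi-completely adjacent, or some pair is not. For $q$ odd, the first subcase yields a $q$-vertex subgraph equal to $(K_q,\pm)$ minus disjoint edges, which contains a switched spanning copy of $(K_q,+)$ and is finished directly by (sb1) and (sb4); it has only $q$ vertices, so (for $q\geq 5$) it is not a $\triangledown$-complete graph of order $3(q-1)/2$ up to additions. Only the second subcase produces the $\triangledown$-complete graph, and getting there needs arguments you do not sketch: Lemma \ref{lem_antibalanced} to show the offending pair of partite sets spans an antibalanced set, the merging argument forcing that pair to involve the independent set, the splitting argument forcing all remaining partite sets to have at least $3$ vertices, and then Lemma \ref{lem_triangle-complete}. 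Dually, for $q$ even only the all-bi-complete subcase gives $(K_{(q+2)/2},\pm)$ and Lemma \ref{lem_bicomplete}; the other subcase gives $(K_q,\pm)$ minus a perfect matching, which contains no $(K_{q/2+1},\pm)$ and is again handled directly. So the claim ``$q$ even $\Rightarrow$ bi-complete, $q$ odd $\Rightarrow$ $\triangledown$-complete'' fails in half the cases, and since you yourself flag this step as the main obstacle, what you have is an outline of the right strategy rather than a proof.
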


\begin{proof}
Let $(G,\sigma)$ be a counterexample with minimum $|V(G)|$ and subjecting to it, $|E(G)|$ is maximum.

We first claim that $(G,\sigma)$ is complete.
Suppose to the contrary that $G$ has two non-adjacent vertices $x$ and $y$.
Let $(G_1,\sigma_1)$ and $(G_2,\sigma_2)$ be obtained from a copy of $(G,\sigma)$ by identifying $x$ with $y$ into a new vertex $v$ and by adding a positive edge $e$ between $x$ and $y$, respectively.
Since $(G,\sigma)$ has chromatic number $q$, it follows with Theorem \ref{thm_closed_sb} that both $(G_1,\sigma_1)$ and $(G_2,\sigma_2)$ has chromatic number at least $q$. Note the fact that $(K_i,+)$ can be obtained from $(K_j,+)$ by Operation (sb1) whenever $i>j$.
Thus by the minimality of $|V(G)|$, the graph $(G_1,\sigma_1)$ can be obtained from $(K_q,+)$ by Operations (sb1)-(sb5), and by the maximality of $|E(G)|$, so does $(G_2,\sigma_2)$.
We next show that $(G,\sigma)$ can be obtained from $(G_1,\sigma_1)$ and $(G_2,\sigma_2)$ by Operations $(sb2)$ and $(sb3)$, which contradicts the fact that $(G,\sigma)$ is a counterexample.
This contradiction completes the proof of the claim.
Apply Operation $(sb3)$ to $(G_1,\sigma_1)$ and $(G_2,\sigma_2)$ so that $e$ is removed and $v$ is split into $x$ and $y$.
In the resulting graph, identify each pair of vertices that corresponds to the same vertex of $G$ except $x$ and $y$, we thereby obtain exactly $(G,\sigma)$.

We next claim that $(G,\sigma)$ has no triples. The proof of this claim is analogous to the one above. Suppose to the contrary that $(G,\sigma)$ has a triple, say $(x,y,z)$. Let $(a,b,c)$ be a code of $(x,y,z)$.
Take two copies of $(G,\sigma)$.
Add an edge $e_1$ with sign $a$ into one copy between $x$ and $y$, obtaining $(G',\sigma')$.
Add an edge $e_2$ with sign $b$ into the other copy between $x$ and $z$, obtaining $(G'',\sigma'')$.
Clearly, both $(G',\sigma')$ and $(G'',\sigma'')$ have chromatic number at least $q$. By the maximality of $|E(G)|$, they can be obtained by Operations (sb1)-(sb5) from $(K_q,+)$.
To complete the proof of the claim, it remains to show that $(G,\sigma)$ can be obtained from $(G',\sigma')$ and $(G'',\sigma'')$ by Operations (sb1)-(sb5).
Note that Operations (sb3') is a combination of Operations $(sb3)$ and $(sb4)$ by Lemma \ref{lem_comb}.
Apply Operation (sb3') to $(G',\sigma')$ and $(G'',\sigma'')$ so that $e_1$ and $e_2$ are removed, $x'$ is identified with $x''$, and an edge $e$ is added between $y'$ and $z''$.
We have $\sigma(e)=\sigma(e_1)\sigma(e_2)=ab=c\in E(y,z)$.
By applying Operation (sb2) to each pair of vertices that are the copies of the same vertex of $G$ except $x$, we obtain $(G,\sigma)$.

We proceed the proof of the theorem by distinguishing two cases according to the parity of $q$.

Case 1: $q$ is odd.
Since $\chi((G,\sigma))=q$, the vertex set $V(G)$ can be divided into $k$ partite sets $V_1,\ldots,V_{k}$, where $k=\frac{q+1}{2}$, so that $V_1$ is an independent set and all others are antibalanced sets but not independent.
It follows that $|V_i|\geq 2$ for all $i\in \{2,\ldots,k\}$. By the first claim, $|V_1|=1$, and the graphs induced by $V_2,\dots,V_k$ are just-complete and moreover,
every two partite sets are completely adjacent.

Subcase 1.1: every two partite sets are bi-completely adjacent.
Take the vertex in $V_1$ and two arbitrary vertices from each of $V_2,\ldots,V_k$.
Let $(H,\sigma_H)$ be the signed bi-graph induced by these vertices.
Clearly, $|V(H)|=q.$
By the first claim and the assumption, we can see that $(H,\sigma_H)$ is a bi-complete signed bi-graph minus disjoint edges.
Hence, $(H,\sigma_H)$ can be obtained from $(K_q,+)$ by switching at vertices and adding signed edges.
Therefore, $(G,\sigma)$ can be obtained from $(K_q,+)$ by Operations $(sb1)$ and $(sb4)$, a contradiction.

Subcase 1.2: there exist two partite sets $V_j$ and $V_l$ that are not bi-completely adjacent.
If $V_j$ and $V_l$ are not just-completely adjacent, then there always exist three vertices $x,y,z$, w.l.o.g., say $x\in V_j$ and $y,z\in V_l$, such that $m(x,y)=1$ and $m(x,z)=2.$
Note that $m(y,z)=1$. Thus, $(y,x,z)$ is a triple of $(G,\sigma)$, contradicting the second claim.
Hence, we may assume that $V_j$ and $V_l$ are just-completely adjacent.
Recall that both  $V_j$ and $V_l$ induce just-complete signed bi-graphs.
Thus, $V_j\cup V_l$ induces a just-complete signed bi-graphs as well, say $(Q,\sigma_Q)$.
By again the second claim, every triangle in $(Q,\sigma_Q)$ has sign product -1.
Thus, $(Q,\sigma_Q)$ is antibalanced by Lemma \ref{lem_antibalanced}.
It follows that $1\in \{j,l\}$ since otherwise, the division of $V(G)$, obtained from $\{V_1,\ldots,V_{k}\}$ by merging $V_j$ with $V_l$,
yields $\chi((G,\sigma))\leq q-2$, a contradiction.
W.l.o.g., let $j=1$.

We next show that every other pair of partite sets are bi-completely adjacent.
Suppose to the contrary that there exist another two partite sets, say $V_s$ and $V_t$, that are not bi-completely adjacent.
By the same argument as above, $1\in\{s,t\}$. We may assume that $s=1$.
Let $u_1,u_l,u_t$ be a vertex of $V_1,V_l,V_t$, respectively, such that $m(u_1,u_l),m(u_1,u_t)\leq 1.$
Note that $V_l$ and $V_t$ are bi-completely adjacent, we have $m(u_l,u_t)=2.$
It follows that $(u_1,u_l,u_t)$ is a triple of $(G,\sigma)$, contradicting the second claim.

Recall that $V_j\cup V_l$ is an antibalanced set. It follows that, except $V_j$ and $V_l$, any other partite set contains at least 3 vertices since otherwise,
say $|V_r|=2$ with $r\notin \{j,l\}$, the division of $V(G)$, obtained from $\{V_1,\ldots,V_{k}\}$ by merging $V_j$ with $V_l$ and splitting $V_r$ into two independent sets, yields $\chi((G,\sigma))\leq q-1$, a contradiction.

Take a vertex from $V_j$, two vertices from $V_l$ and three vertices from each of the rest partite sets.
Denote by $(H,\sigma_H)$ the signed bi-graph induced by these vertices. Clearly, $|V(H)|=\frac{3(q-1)}{2}$.
By mutiplicity of the edges in $(G,\sigma)$, we can see that $(H,\sigma_H)$ is a
$\triangledown$-complete signed bi-graph. By Lemma \ref{lem_triangle-complete}, $(H,\sigma_H)$ can be obtained from $(K_q,+)$ by Operations (sb1)-(sb5) and therefore, so does $(G,\sigma)$, a contradiction.

Case 2: $q$ is even.
Since $\chi((G,\sigma))=q$, the vertex set $V(G)$ can be divided into $k$ partite sets $V_1,\ldots,V_{k}$, where $k=\frac{q+2}{2}$, so that at least two of them are independent sets, say $V_1$ and $V_2$.

Subcase 2.1: every two partite sets are bi-completely adjacent.
Take a vertex from each partite set.
Clearly, these vertices induce $(K_\frac{q+2}{2},\pm)$.
Hence, $(G,\sigma)$ can be obtained from $(K_\frac{q+2}{2},\pm)$ by Operation (sb1).
By Lemma \ref{lem_bicomplete}, $(K_\frac{q+2}{2},\pm)$ can be obtained from $(K_q,+)$ by Operation (sb1)-(sb5), and therefore, so does $(G,\sigma)$, a contradiction.

Subcase 2.2: there exist two partite sets $V_j$ and $V_l$ that are not bi-completely adjacent. By a similar argument as in Subcase 1.2, we can deduce that $\{j,l\}=\{1,2\}$,
that every other pair of partite sets are bi-completely adjacent, and that $|V_3|,\ldots,|V_k|\geq 2.$ It follows with the first claim that $V_1\cup V_2$ induces  two vertices with a negative edge between them.

Take a vertices from each of $V_1$ and $V_2$, and two vertices from each of the rest partite sets.
We can see that the signed bi-graph, induced by these vertices, can be obtained from $(K_q,+)$ by adding signed edges and switching at vertices.
Therefore, $(G,\sigma)$ can be obtained from $(K_q,+)$ by Operations (sb1)-(sb5), a contradiction.
\end{proof}

\begin{corollary}
	Every signed graph with chromatic number $q$ can be obtained from $(K_q,+)$ by Operations (sb1)-(sb5).
\end{corollary}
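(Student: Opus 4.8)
The plan is to deduce this corollary directly from the Haj\'{o}s-like theorem (Theorem \ref{thm_Hajos_sb}) by observing that a signed simple graph is nothing but a special signed bi-graph. Recall that a signed bi-graph allows $0\le m(u,v)\le 2$ edges between any two vertices, carrying opposite signs when $m(u,v)=2$, whereas a signed simple graph is exactly the case $m(u,v)\le 1$ for every pair. Thus the class of signed graphs embeds into the class of signed bi-graphs as the subclass in which no pair of vertices carries a double edge; in particular, every signed graph is already a signed bi-graph. The whole strategy is therefore to specialize the conclusion of the main theorem to this subclass.

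First I would check that the two notions of chromatic number agree. For a signed simple graph $(G,\sigma)$, both the coloring constraint $c(v)\neq\sigma(e)c(w)$ for each edge $e=vw$ and the definition of $\chi$ are literally the same whether $(G,\sigma)$ is regarded as a signed graph or as a signed bi-graph, since the excerpt extends these concepts to signed bi-graphs so that they ``work in the same way.'' Hence a signed graph with chromatic number $q$ is a signed bi-graph with chromatic number $q$. Second, I would note that the seed $(K_q,+)$ is itself a signed simple graph, namely the all-positive complete graph, so it is a legitimate starting object in both settings; there is no sleight of hand in the base case.

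With these two observations in place the corollary is immediate: given a signed graph $(G,\sigma)$ with $\chi((G,\sigma))=q$, view it as a signed bi-graph of chromatic number $q$ and apply Theorem \ref{thm_Hajos_sb} to obtain it from $(K_q,+)$ by Operations (sb1)--(sb5). The only point worth flagging -- and it is not truly an obstacle -- is that the construction furnished by the theorem may pass through genuine signed bi-graphs with double edges at intermediate stages, even though the initial seed $(K_q,+)$ and the final target $(G,\sigma)$ are both simple. This is harmless, because Operations (sb1)--(sb5) and the statement of Theorem \ref{thm_Hajos_sb} are formulated for the entire class of signed bi-graphs, so intermediate multiplicities are permitted. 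Consequently there is no combinatorial content beyond the main theorem; the proof is a direct specialization, and the hard work lies entirely in Theorem \ref{thm_Hajos_sb} rather than in the corollary.
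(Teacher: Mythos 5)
Your proof is correct and matches the paper's (implicit) argument exactly: the paper derives this corollary purely by noting that signed simple graphs form a subclass of signed bi-graphs with the same notion of chromatic number, so Theorem \ref{thm_Hajos_sb} specializes immediately. Your added remark that intermediate stages of the construction may contain double edges, and that this is harmless since the operations are defined on signed bi-graphs, is a sensible clarification but not a departure from the paper's route.
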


\end{document}